\theoremstyle{plain}
\newtheorem{teo}{Theorem}[section]
\newtheorem{lem}[teo]{Lemma}
\newtheorem{prop}[teo]{Proposition}
\newtheorem{cor}[teo]{Corollary}
\theoremstyle{definition}
\newtheorem{defi}[teo]{Definition}
\newtheorem{rem}[teo]{Remark}
\newtheorem{rems}[teo]{Remarks}
\numberwithin{equation}{teo}
\newcommand{\Cbb}{{\mathbb C}}
\newcommand{\Qbb}{{\mathbb Q}}
\newcommand{\Zbb}{{\mathbb Z}}
\newcommand{\Rcal}{{\mathcal R}}
\newcommand{\Scal}{{\mathcal S}}
\newcommand{\Ical}{{\mathcal I}}
\begin{document}
\title{Klt singularities of horospherical pairs}
\author{Boris Pasquier}

\maketitle
\abstract{Let $X$ be a horospherical $G$-variety and let $D$ be an effective $\Qbb$-divisor of $X$ that is stable under the action of a Borel subgroup $B$ of $G$ and such that $D+K_X$ is $\Qbb$-Cartier. We prove, using Bott-Samelson resolutions, that the pair $(X,D)$ is klt if and only if $\lfloor D\rfloor=0$.}

\section{Introduction}

Let $X$ be a normal algebraic variety over $\Cbb$ and let $D$ be an effective $\Qbb$-divisor such that $D+K_X$ is $\Qbb$-Cartier. If the pair $(X,D)$ has klt singularities (see Definition~\ref{def:klt}) then $\lfloor D\rfloor=0$ (ie $D=\sum_{D_i\,\rm{irreducible}}a_iD_i$ with $a_i\in[0,1[$).
The inverse implication is false in general.
In \cite{AlexeevBrion}, V.~Alexeev and M.~Brion proved that, if $X$ is a spherical $G$-variety and $D$ be an effective $\Qbb$-divisor of $X$ such that $D+K_X$ is $\Qbb$-Cartier, $\lfloor D\rfloor=0$ and $D=D_G+D_B$ where $D_G$ is $G$-stable and $D_B$ is stable under the action of a Borel subgroup $B$ of $G$, then $(X,D_G+D_B')$ has klt singularities for general $D_B'$ in $|D_B|$.

Here, we prove that, if $X$ is a horospherical $G$-variety and $D$ be an effective $\Qbb$-divisor of $X$ such that $D+K_X$ is $\Qbb$-Cartier, $\lfloor D\rfloor=0$ and $D$ is stable under the action of a Borel subgroup $B$ of $G$, then the pair $(X,D)$ has klt singularities.

The strategy of the proof is the following.  In section~3, we recall the definitions and some properties of Bott-Samelson resolutions of any flag variety $G/P$. In particular, they are log resolutions and the klt singularity condition in the case of flag varieties becomes equivalent to some inequalities on the root systems of $G$ and $P\subset G$, which we prove in section~5. 
And in section~4, we deduce the horospherical case from the case of flag varieties, using that any horospherical variety admits a desingularization that is a toric fibration over a flag variety (ie a fibration over a flag variety whose fiber is a smooth toric variety).

\section{Notations and definitions}
In all the paper, varieties are algebraic varieties over $\Cbb$. 

We first recall the definition of klt singularities.

\begin{defi}\label{def:klt}
Let $X$ be a normal variety and let $D$ be an effective $\Qbb$-divisor such that $K_X+D$ is $\Qbb$-Cartier.
The pair $(X,D)$ is said to be klt (Kawamata log terminal) if for any resolution  $f:\,V\longrightarrow X$ of $X$ such that $K_V=f^*(K_X+D)+\sum_{i\in \mathcal{E}}a_iE_i$ where the $E_i$'s are distinct irreducible divisors, we have $a_i>-1$ for any $i\in \mathcal{E}$.
\end{defi}

\begin{rem}\label{rem:klt}
\begin{enumerate}
\item In fact, it is enough to check the above property for one log-resolution to say that a pair $(X,D)$ is klt. A log-resolution of $(X,D)$ is a resolution $f$ such that, the exceptional locus $\rm{Exc}(f)$ of $f$ is of pure codimension one and the divisor $f_*^{-1}(D)+\sum_{E\subset\rm{Exc}(f)}E$ has simple normal crossings (where $f_*^{-1}(D)$ is the strict transform of $D$ by $f$).
\item The condition "$a_i>-1$ for any $i\in \mathcal{E}$" can be replaced by: $\lfloor D\rfloor=0$ and for any $i\in \mathcal{E}$ such that $E_i$ is exceptional for $f$, $a_i>-1$.
\end{enumerate}
\end{rem}

In all the paper, $G$ denotes a connected reductive algebraic group over $\Cbb$.

Let $T$ be a maximal torus in $G$ and let $B$ be a Borel subgroup of $G$ containing $T$.
We denote by $\Rcal$ the root system of $(G,B,T)$, by $\Rcal^+$ the set of positive roots and by $\Scal$ the set of simple roots. For any simple root $\alpha\in\Scal$ we denote by $s_\alpha$ the corresponding simple reflection of the Weyl group $W=N_G(T)/T$. By abuse of notation, for any $w$ in $W$, we still denote by $w$ one of its representative in $G$. We denote by $w_0$ the longest element of $W$.

Let $P$ be a parabolic subgroup of $G$ that contains $B$. Denote by $\Ical$ the set of simple roots of $P$ (in particular, if $P=B$ we have $\Ical=\emptyset$ and, if $P=G$ we have $\Ical=\Scal$). Denote by $W_P$ the subgroup of $W$ generated by $\{s_\alpha\,\mid\,\alpha\in\Ical\}$. Also denote by $W^P$ the quotient $W/W_P$ and denote by $w_0^P$ the longest element of $W^P$.

The Bruhat decomposition of $G$ in $B\times B$-orbits gives the following decomposition of $G/P$:
$$G/P=\bigsqcup_{w\in W^P}BwP/P.$$ Moreover the dimension of a cell $BwP/P$ equals the length of $w$. In particular, the length of $w_0^P$ is the dimension of $G/P$ and irreducible $B$-stable divisors of $G/P$ are the closures of the cells $Bs_\alpha w_0^PP/P$ with $\alpha\in\Scal\backslash\Ical$. We denote them by $D_\alpha$.

A horospherical variety $X$ is a normal $G$-variety with an open $G$-orbit isomorphic to a torus fibration $G/H$ over a flag variety $G/P$ (ie $P/H$ is a torus). The irreducible divisors of such $X$ that are $B$-stable but not $G$-stable, are the closures in $X$ of the inverse images in $G/H$ of the Schubert divisors $D_\alpha$ of $G/P$ defined above. We still denote them by $D_\alpha$, with $\alpha\in\Scal\backslash\Ical$.

If $X$ and $Y$ are varieties such that a parabolic subgroup $P$ have a right action on $X$ and a left action on $Y$, we denote by $X\times^PY$ the quotient of the product $X\times Y$ by the following equivalences: $$\forall (x,y)\in X\times Y,\,\forall P\in P,\,\,(x,y)\sim(x\cdot p,p^{-1}\cdot y).$$
\section{Bott-Samelson desingularizations and klt pairs of flag varieties}

In that section, we prove the following result.

\begin{teo}\label{th:kltflag}
Let $D=\sum_{\alpha\in\Scal\backslash\Ical}d_\alpha D_\alpha$ be a $B$-stable $\Qbb$-divisor of $G/P$ such that $\forall\alpha\in\Scal\backslash\Ical,\,d_\alpha\in[0,1[$.

There exists a $B$-stable log-resolution $\phi:\,Z/P\longrightarrow G/P$ of $(G/P,D)$, where $Z$ is a variety with a right action of $P$ and a left action of $B$, such that the exceptional divisors of $\phi$ are the quotient by $P$ of irreducible divisors of $Z$, and such that $K_{Z/P}-\pi^*(K_{G/P}+D)=\sum_{i\in\mathcal{E}}a_iE_i$ where for any $i\in\mathcal{E}$, $a_i>-1$ and $E_i$ is an irreducible divisor of $f$.

In particular the pair $(G/P,D)$ is klt.

Moreover, for any $i\in\mathcal{E}$, $E_i$ is the quotient of an exceptional $B\times P$-stable divisor $F_i$ of $Z$ by $P$ (left action of $B$ and right action of $P$). 

\end{teo}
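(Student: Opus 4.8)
The plan is to build $\phi$ from a Bott--Samelson variety attached to a reduced decomposition of $w_0^P$. Fix a reduced word $w_0^P=s_{\beta_1}\cdots s_{\beta_r}$ with $r=\dim G/P$, write $P_{\beta_i}=B\sqcup Bs_{\beta_i}B$ for the associated minimal parabolic, and set
$$Z=P_{\beta_1}\times^B\cdots\times^B P_{\beta_r}\times^B P,$$
which carries a left $B$-action on the first factor and a right $P$-action on the last factor. The quotient $Z/P=P_{\beta_1}\times^B\cdots\times^B P_{\beta_r}/B$ is the classical Bott--Samelson variety, and $\phi\colon Z/P\to G/P$, $[p_1,\dots,p_r]\mapsto p_1\cdots p_rP$, is the multiplication map (I take $\pi=\phi$). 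First I would record the standard facts: $Z/P$ is smooth, being an iterated $\Pbb^1$-bundle over a point (each forgetful map $\pi_i$ is a $\Pbb^1$-bundle with fibre $P_{\beta_i}/B$); $\phi$ is proper and $B$-equivariant; and since the word is reduced, $\phi$ restricts to an isomorphism over the big cell $Bw_0^PP/P$, so $\phi$ is birational, hence a resolution.

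Next I would describe the boundary. For $1\le i\le r$ let $F_i=\{[p_1,\dots,p_r,p]\in Z\mid p_i\in B\}$; each $F_i$ is a $B\times P$-stable prime divisor of $Z$, and $E_i:=F_i/P$ is a prime divisor of $Z/P$ (this already yields the final ``moreover'' assertion). The $E_i$ are smooth and meet with simple normal crossings, by the standard inductive SNC structure of Bott--Samelson boundaries along the tower $\pi_i$. Since $\phi$ is an isomorphism over the big cell, $\phi^{-1}\big(\bigcup_\alpha D_\alpha\big)=\bigcup_i E_i$, so each strict transform $\phi_*^{-1}(D_\alpha)$ is one of the $E_i$; I relabel so that the non-contracted $E_i$ are exactly the $\phi_*^{-1}(D_\alpha)$ and the remaining $E_i$ are $\phi$-exceptional with centre of codimension $\ge 2$. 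Checking that $\phi_*^{-1}(D)+\sum_i E_i$ is SNC then shows $\phi$ is a log resolution of $(G/P,D)$, so by Remark~\ref{rem:klt} it suffices to verify $a_i>-1$ on this single resolution.

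The core is the discrepancy computation $K_{Z/P}=\phi^*(K_{G/P}+D)+\sum_i a_iE_i$. For a strict transform $E_i=\phi_*^{-1}(D_\alpha)$ the divisor is non-exceptional and occurs with multiplicity one in $\phi^*D_\alpha$, so $a_i=-d_\alpha\in{]-1,0]}$, automatic from $d_\alpha\in[0,1[$ and matching Remark~\ref{rem:klt}(2). For an exceptional $E_i$ I would write $K_{Z/P}-\phi^*K_{G/P}=\sum_{\mathrm{exc}}b_iE_i$ with $b_i\ge0$ (discrepancies over the smooth, hence terminal, $G/P$) and $\phi^*D_\alpha=\phi_*^{-1}(D_\alpha)+\sum_{\mathrm{exc}}m_{i,\alpha}E_i$ with $m_{i,\alpha}\ge0$, so that $a_i=b_i-\sum_\alpha d_\alpha m_{i,\alpha}$. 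Because every $d_\alpha<1$, the worst case $d_\alpha\to1$ gives $a_i\to b_i-\sum_\alpha m_{i,\alpha}$, and thus $a_i>-1$ holds for all admissible $D$ precisely when
$$\sum_{\alpha\in\Scal\backslash\Ical}m_{i,\alpha}\le b_i+1\qquad\text{for each exceptional }E_i.$$
This reduces the theorem to one combinatorial inequality per exceptional divisor.

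Finally I would compute $b_i$ and $m_{i,\alpha}$ explicitly from the $\Pbb^1$-tower. Each step $\pi_i$ contributes, via adjunction, a relative canonical class governed by the $T$-weight of the fibre $P_{\beta_i}/B$, while the pullbacks $\phi^*D_\alpha$ are read off from the line bundles $\cL_{\varpi_\alpha}$ attached to the fundamental weights; propagating these through the tower expresses $b_i$ and $m_{i,\alpha}$ in terms of the roots $\gamma_j=s_{\beta_1}\cdots s_{\beta_{j-1}}(\beta_j)$, which run bijectively over $\Rcal^+\backslash\Rcal_P^+$, and the pairings $\langle\varpi_\alpha,\gamma_j^\vee\rangle$. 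The displayed inequality then becomes a purely root-theoretic statement about these data. I expect this inequality to be the main obstacle, and it is exactly what section~5 establishes; granting it, $a_i>-1$ follows for every exceptional $E_i$, which together with the strict-transform case shows that $(G/P,D)$ is klt.
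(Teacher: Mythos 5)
Your proposal is correct and takes essentially the same route as the paper: the same Bott--Samelson resolution with its $B$-stable simple-normal-crossing boundary $E_i=F_i/P$, a discrepancy computation expressed through the roots $\gamma_i=s_{\beta_1}\cdots s_{\beta_{i-1}}(\beta_i)$ running over $\Rcal^+\backslash\Rcal^+_\Ical$, and a reduction to the root-system inequality of Section~5. The computation you defer at the end is exactly the paper's Propositions~\ref{prop:pullback} and~\ref{prop:antican} (Demazure's pullback formula $\phi^*(D_\lambda)=\sum_i\langle\lambda,\beta_i^\vee\rangle E_i$ and Ramanathan's anticanonical formula), which yield $m_{i,\alpha}=\langle\varpi_\alpha,\gamma_i^\vee\rangle$ and $b_i=\langle 2\rho^P-\rho,\gamma_i^\vee\rangle-1$, so that your inequality $\sum_{\alpha\in\Scal\backslash\Ical} m_{i,\alpha}\le b_i+1$ is precisely the statement of Proposition~\ref{prop:calculrootsys}.
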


\begin{rems}
\begin{enumerate}[label=(\roman*)]
\item In general, $\sum_{\alpha\in\Scal\backslash\Ical}D_\alpha$ is not a simple normal crossing $\Qbb$-divisor of $G/P$. Then, it is not enough to know that $G/P$ is smooth to say that $(G/P,D)$ is klt, when $D\neq 0$.
\item Since $D$ is globally generated, then $(G/P,D')$ is klt for a general $D'$ in $|D|$ (consequence of \cite[Lemma~9.1.9]{Lazarsfeld}). We can generalized this remark to spherical pairs, see \cite[Theorem 5.3]{AlexeevBrion}.
\end{enumerate}
\end{rems}

To prove Theorem~\ref{th:kltflag}, we use a Bott-Sameslon resolution of $G/P$. Bott-Samelson resolution of Schubert varieties of $G/B$ have been introduced by M.~Demazure in \cite{DemazureBS}. Here, we use the easy (and well-known) generalization of his work to $G/P$. And we choose the equivalent definition of Bott-Samelson resolutions that is now used in almost all papers on the topic.

For any simple root $\alpha$, we denote by $P_\alpha$ the minimal parabolic subgroup containing $B$ such that $\alpha$ is a simple root of $P_\alpha$.

\begin{defi}
Let $s_{\alpha_1}s_{\alpha_2}\cdots s_{\alpha_N}$ be a reduced decomposition of $w_0^P$ with $\alpha_1\dots,\alpha_N$ in $\Scal$. We define the Bott-Samelson variety $BS$ to be the quotient of $P_{\alpha_1}\times P_{\alpha_2}\times\cdots\times P_{\alpha_N}$ by the right action of $B^N$ given by,
$$(p_1,p_2,\dots,p_N)\cdot(b_1,b_2,\dots,b_N)=(p_1b_1,b_1^{-1}p_2b_2,\dots,b_{N-1}^{-1}p_Nb_N).$$
\end{defi}

The map $\phi':\,BS\longrightarrow G/P$ that sends $(p_1,p_2,\dots,p_N)$ to $p_1p_2\cdots p_NP/P$ is well-defined and birational (it is an isomorphism from the quotient of $Bs_{\alpha_1}B\times Bs_{\alpha_2}B\times\cdots\times Bs_{\alpha_N}B$ by the right action of $B^N$ to $Bw_0^P/P$). (We can decompose this map by the usual map from $V$ to the Schubert variety $\overline{Bw_0^PB/B}$ of $G/B$ and the projection map from $G/B$ to $G/P$.)\\

Hence, to get $Z$ as in Theorem~\ref{th:kltflag}, we define $Z$ to be the quotient of $P_{\alpha_1}\times\cdots\times P_{\alpha_{N-1}}\times P_{{\alpha_N\cup\Ical}}$ by the right action of $B^{N-1}$ given by,
$$(p_1,\dots,p_N)\cdot(b_1,b_2,\dots,b_{N-1})=(p_1b_1,\dots,b_{N-1}^{-1}p_N).$$
Then, since $P_{{\alpha_N\cup\Ical}}/P=P_{\alpha_N}P/P\simeq P_{\alpha_N}/B$, the $B$-varieties $Z/P$ and $BS$ are isomorphic and $\phi:\,Z/P\longrightarrow G/P$ that sends $(p_1,\dots,p_N)$ to $p_1\cdots p_NP/P$ is well-defined and birational.

The lines bundles and divisors Bott-Samelson varieties are well-known, so that we can describe the lines bundles of $Z/P$, and the divisors of $Z/P$ and $Z$. 

\begin{prop}\label{prop:divBS}
For any $i\in\{1,\dots,N-1\}$, we define $F_i$ to be the $B\times P$-stable divisor of $Z$ defined by $p_i\in B$; and we define $F_N$ to be the $B\times P$-stable divisor of $Z$ defined by $p_N\in P$. 

Then, we can also define $E_i$ to be the $B$-stable divisor $F_i/P$ of $Z/P$. Moreover, the $B$-stable irreducible divisors of $Z/P$ are the $E_i$'s with $i\in\{1,\dots,N\}$, and the family $(E_i)_{i\in\{1,\dots,N\}}$ is a basis of the cone of effective divisors of $Z/P$. 
\end{prop}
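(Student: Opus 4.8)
The plan is to exploit the iterated $\Pbb^1$-bundle structure of the Bott--Samelson variety together with the $B$-action, treating the three assertions in turn. First I would fix the tower: writing $BS\cong Z/P$, set $Z_k = P_{\alpha_1}\times^B\cdots\times^B P_{\alpha_k}/B$, so that $Z_0$ is a point, $Z_N\cong Z/P$, and the forgetful map $\pi_k\colon Z_k\to Z_{k-1}$ is a $\Pbb^1$-bundle with fibre $P_{\alpha_k}/B\cong\Pbb^1$ admitting the section $\sigma_k\colon[p_1,\dots,p_{k-1}]\mapsto[p_1,\dots,p_{k-1},1]$. Under the isomorphism $Z/P\cong BS$, the divisor $F_i$ (for $i<N$, defined by $p_i\in B$) and $F_N$ (defined by $p_N\in P$, i.e. $p_N\in B$ in the $P_{\alpha_N}$-factor of $BS$) all correspond to a condition $p_i\in B$; thus $E_N$ is the image of $\sigma_N$, while for $i<k$ the locus $\{p_i\in B\}$ in $Z_k$ is $\pi_k^{-1}$ of the corresponding locus in $Z_{k-1}$. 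Irreducibility of each $E_i$ then follows by induction on the tower: $Z_{k-1}$ is irreducible, hence so is its section image $\sigma_k(Z_{k-1})\cong Z_{k-1}$, and so is the preimage of an irreducible divisor under the $\Pbb^1$-bundle $\pi_k$. The $B$- and $P$-stability of $F_i$ is immediate, since left multiplication by $B$ preserves $p_1\in B$ and fixes every condition $p_j\in B$ with $j\ge 2$, while the right $P$-action preserves $p_N\in P$; passing to the quotient gives the $B$-stability of $E_i$.

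Next I would show the $E_i$ are all the $B$-stable prime divisors. The open locus $U=\{p_i\notin B\ (i<N),\ p_N\notin P\}$ is exactly the complement of $\bigcup_i E_i$, and by construction $\phi$ restricts to an isomorphism of $U$ onto the big cell $Bw_0^PP/P$, which is a single open dense $B$-orbit. Hence any $B$-stable prime divisor meeting $U$ would contain this orbit, and therefore all of $Z/P$, which is impossible for a divisor; so every $B$-stable prime divisor is contained in $\bigcup_i E_i$ and thus equals some $E_i$.

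For the Picard group I would run the tower once more: for a $\Pbb^1$-bundle with a section one has $\operatorname{Pic}(Z_k)=\pi_k^*\operatorname{Pic}(Z_{k-1})\oplus\Zbb[\sigma_k]$, and by induction $\{[E_1],\dots,[E_N]\}$ is a $\Zbb$-basis of $\operatorname{Pic}(Z/P)$; in particular these classes are linearly independent. It remains to identify the effective cone with the simplicial cone they span. One inclusion is clear since each $E_i$ is effective. For the reverse I would use the $B$-action: every line bundle on the normal $B$-variety $Z/P$ is $B$-linearizable after passing to a power (as $\operatorname{Pic}(B)=0$), so if $D'$ is effective then some $\mathcal{O}(mD')$ is $B$-linearized and, by Lie--Kolchin applied to the finite-dimensional nonzero $B$-module $H^0(\mathcal{O}(mD'))$, carries a $B$-eigensection $s$; then $\operatorname{div}(s)$ is a $B$-stable effective divisor linearly equivalent to $mD'$. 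By the previous paragraph $\operatorname{div}(s)=\sum_i c_iE_i$ with $c_i\ge 0$, whence $[D']=\tfrac1m\sum_i c_i[E_i]$ lies in the cone generated by the $E_i$. Together with linear independence, this shows that $(E_i)_i$ is a basis of the effective cone.

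The main obstacle is the Picard-group computation, which is where the precise geometry of the Bott--Samelson tower enters: one must check that the forgetful maps really are Zariski-locally trivial $\Pbb^1$-bundles and that, at each stage, the section class together with the pulled-back basis are linearly independent, so that the $[E_i]$ form a genuine basis rather than merely a spanning family. Once this linear independence is secured, the identification of the effective cone becomes soft, following from the semi-invariant-section argument above.
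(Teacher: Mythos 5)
Your proof is correct, but it cannot be ``the same as the paper's'' for a simple reason: the paper offers no proof of Proposition~\ref{prop:divBS} at all. It is introduced with the remark that line bundles and divisors of Bott--Samelson varieties are well known, i.e.\ the statement is treated as a citation to the classical literature (Demazure's paper and its descendants). What you have done is supply the standard argument that this literature rests on, and your three ingredients are exactly the right ones: (a) the iterated $\Pbb^1$-bundle tower $Z_k\to Z_{k-1}$ with its tautological sections, giving irreducibility of the $E_i$ and, by induction, that $([E_1],\dots,[E_N])$ is a $\Zbb$-basis of $\operatorname{Pic}(Z/P)$; (b) the identification of the complement of $\bigcup_i E_i$ with the preimage of the big cell $Bw_0^PP/P$, a single dense $B$-orbit, which forces every $B$-stable prime divisor to be one of the $E_i$; (c) linearization of a power of any effective line bundle plus Lie--Kolchin (Borel fixed point in $\Pbb(H^0)$) to replace an arbitrary effective class by a $B$-stable effective representative, pinning down the effective cone as the simplicial cone on the $[E_i]$. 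Two details are worth making explicit to close the point you yourself flag as the main obstacle: the maps $Z_k\to Z_{k-1}$ are Zariski-locally trivial because $B$ is a special group (connected solvable), so the relevant $B$-torsors are Zariski-locally trivial; and the splitting $\operatorname{Pic}(Z_k)=\pi_k^*\operatorname{Pic}(Z_{k-1})\oplus\Zbb[\sigma_k]$ can be obtained from the exact sequence restricting to the generic fibre $\Pbb^1_{k(Z_{k-1})}$, split by the fibre degree, which avoids having to realize $Z_k$ as the projectivization of a rank-two vector bundle. With those two remarks your argument is complete and self-contained, which is arguably a service the paper does not render its reader.
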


First remark that the divisor $\sum_{i=1}^NE_i$ is clearly a simple normal crossing divisor. Also, since $G/P$ is smooth and by \cite[VI.1, Theorem~1.5]{Kollar}, we know that the exceptional locus of $\phi$ is of pure codimension one, so it is the union of the $E_i$'s contracted by $\phi$.\\

Now, let $\lambda$ be a character of $P$. It defines a line bundle $\mathcal{L}_{G/P}(\lambda)$ on $G/P$ (where $P$ acts on the fiber over $P/P$ by the character $\lambda$). And by pull-back by $\phi$, it defines a line bundle $\mathcal{L}_{Z/P}(\lambda)$ on $Z/P$. 

The total space of $\mathcal{L}_{Z/P}(\lambda)$ is the quotient of $P_{\alpha_1}\times \cdots\times P_{\alpha_{N-1}}\times P_{{\alpha_N\cup\Ical}}\times \Cbb$ by the right action of $B^{N-1}\times P$ given by,
$$(p_1,\dots,p_N,z)\cdot(b_1,\dots,b_{N-1},p)=(p_1b_1,b_1^{-1}p_2b_2,\dots,b_{N-1}^{-1}p_Np,\lambda(p)z).$$

By \cite[Section~2.5, Proposition~1]{DemazureBS} adapted to our notation and by induction on $N$, we have the following result.

\begin{prop}\label{prop:pullback}
Let $\lambda$ be a character of $P$. Then $\mathcal{L}_{G/P}(\lambda)$ is the line bundle associated to the $B$-stable divisor $D_\lambda:=\sum_{\alpha\in\Scal\backslash\Ical}\langle\lambda,\alpha^\vee\rangle D_\alpha$.

Moreover, $\phi^*(D_\lambda)=\sum_{i=1}^N\langle\lambda,\beta_i^\vee\rangle E_i$, where for any $i\in\{1,\dots,N\}$, $\beta_i=s_{\alpha_1}\cdots s_{\alpha_i-1}(\alpha_i)$.
\end{prop}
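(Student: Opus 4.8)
Part~1 is a computation in $\operatorname{Pic}(G/P)$, and I would handle it by pairing against curves. The classes $[D_\alpha]$, $\alpha\in\Scal\backslash\Ical$, form a $\Zbb$-basis of $\operatorname{Pic}(G/P)$, dual under intersection to the Schubert curves $C_\beta:=\overline{Bs_\beta P/P}\cong P_\beta/B\cong\Pbb^1$ (for $\beta\in\Scal\backslash\Ical$, since then $P_\beta\cap P=B$), in the sense that $[D_\alpha]\cdot[C_\beta]=\delta_{\alpha\beta}$. So the plan is to read off the coefficients of $\mathcal L_{G/P}(\lambda)$ from its degrees on the $C_\beta$. Restricting $\mathcal L_{G/P}(\lambda)$ to $C_\beta\cong P_\beta/B$ reduces to the rank-one group $P_\beta$, where the associated line bundle has degree $\langle\lambda,\beta^\vee\rangle$; hence $c_1(\mathcal L_{G/P}(\lambda))=\sum_{\alpha}\langle\lambda,\alpha^\vee\rangle[D_\alpha]$, i.e. $\mathcal L_{G/P}(\lambda)$ is associated to $D_\lambda$.

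For the pullback formula I would argue by induction on $N$, using that $Z/P$ is an iterated $\Pbb^1$-bundle. Write $Z''/P=BS(\alpha_1,\dots,\alpha_{N-1})$ and let $q:Z/P\longrightarrow Z''/P$ be the $\Pbb^1$-bundle forgetting the last factor. Then $E_N$ is a section of $q$ while $E_i=q^*E_i''$ for $i<N$, so that $\operatorname{Pic}(Z/P)=q^*\operatorname{Pic}(Z''/P)\oplus\Zbb[E_N]$. Since $\phi^*(D_\lambda)=\phi^*c_1(\mathcal L_{G/P}(\lambda))$, I would first restrict to a fibre $\Phi\cong\Pbb^1$ of $q$: here $\phi$ maps $\Phi$ isomorphically onto a $G$-translate of $C_{\alpha_N}$, and because $\mathcal L_{G/P}(\lambda)$ is $G$-linearized its class is translation-invariant, so $\deg_\Phi\phi^*\mathcal L_{G/P}(\lambda)=\langle\lambda,\alpha_N^\vee\rangle$; this pins down the coefficient of the section, $c_N=\langle\lambda,\alpha_N^\vee\rangle$. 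To propagate the remaining coefficients I would restrict to the section $E_N\cong Z''/P$ and use $\phi|_{E_N}=\phi''\circ(q|_{E_N})$, reducing the $i<N$ part to the statement for the shorter word, $\phi''^*\mathcal L_{G/P}(\lambda)=\sum_{i<N}\langle\lambda,\beta_i''^\vee\rangle E_i''$.

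The step where the Weyl-group twist enters, and the one I expect to be the main obstacle, is the correction produced by the self-intersection of the section. Restricting $\phi^*\mathcal L_{G/P}(\lambda)=q^*(\cdots)+c_N[E_N]$ to $E_N$ leaves an extra term $c_N\,[E_N]|_{E_N}$, where $[E_N]|_{E_N}=c_1(N_{E_N/(Z/P)})$ is the degree of the normal bundle of the section. The fibre direction of $q$ along $E_N$ has $B$-weight $-\alpha_N$, so this normal bundle is the line bundle on $Z''/P$ induced by the character $-\alpha_N$ of the residual $B$-action; identifying it in the basis $(E_i'')$ is exactly the content of Demazure's Proposition [DemazureBS, Section~2.5], and expresses $[E_N]|_{E_N}$ through the pairings $\langle\alpha_N,\beta_i''^\vee\rangle$. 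Feeding this into the recursion replaces each $\beta_i''$ by $s_{\alpha_N}(\beta_i'')$, so that iterating the induction writes $\beta_i$ as the image of $\alpha_i$ under the relevant ordered product of the reflections $s_{\alpha_j}$. The delicate bookkeeping is precisely to get the order of these reflections right (equivalently, the sign and base point of each section in the tower); this is the point requiring the most care, the rest being formal manipulation in the Picard groups of the $\Pbb^1$-bundles.
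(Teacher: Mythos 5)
Your overall strategy is sound, and in fact it is exactly what the paper itself appeals to: the paper gives no argument for this proposition beyond citing \cite{DemazureBS} and ``induction on $N$'', and your tower of $\Pbb^1$-bundles with fibre/section restrictions is the standard way to carry that out. Your Part~1 (duality of the $[D_\alpha]$ against the Schubert curves $C_\beta$) is correct. The problem is the step you explicitly defer --- ``getting the order of the reflections right'' --- which is not routine bookkeeping but the crux, and it resolves \emph{against} the statement you set out to prove. Your own first step already shows the tension: restriction to a fibre of $q$ gives the coefficient of $E_N$ as $\langle\lambda,\alpha_N^\vee\rangle$, whereas the printed formula asserts it is $\langle\lambda,\beta_N^\vee\rangle$ with $\beta_N=s_{\alpha_1}\cdots s_{\alpha_{N-1}}(\alpha_N)$, and these disagree in general. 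If you run your recursion to the end (each step applies $s_{\alpha_N}$ to the roots produced by the shorter word, and the normal bundle of $E_N$ is the bundle associated to $-\alpha_N$, which forces the induction hypothesis to be the $G/B$-statement for arbitrary characters of $T$, not just characters of $P$), you obtain $\phi^*(D_\lambda)=\sum_{i=1}^N\langle\lambda,\gamma_i^\vee\rangle E_i$ with $\gamma_i=s_{\alpha_N}s_{\alpha_{N-1}}\cdots s_{\alpha_{i+1}}(\alpha_i)$: the reflections compose from the \emph{right-hand} end of the word, not the left-hand end as printed.

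The discrepancy is genuine because the proposition as printed is false. Take $G=SL_3$ with simple roots $\alpha,\gamma$ and $\Ical=\{\gamma\}$, so $w_0^P=s_\gamma s_\alpha$ (thus $\alpha_1=\gamma$, $\alpha_2=\alpha$, $N=2$) and $G/P\cong\Pbb^2$. Here $\phi:Z/P\to\Pbb^2$ is the blow-up at the $B$-fixed point $P/P$, with $E_2$ the exceptional curve (on $F_2$ one has $p_1\in P_\gamma=P$ and $p_2\in P$, so $\phi$ contracts it to $P/P$) and $E_1$ the strict transform of the unique Schubert divisor $D_\alpha$, a line through $P/P$. For $\lambda=\varpi_\alpha$ we get $D_\lambda=D_\alpha$, hence $\phi^*(D_\lambda)=E_1+E_2$, while the printed formula gives $\langle\varpi_\alpha,\gamma^\vee\rangle E_1+\langle\varpi_\alpha,(\alpha+\gamma)^\vee\rangle E_2=E_2$. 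So what your method proves (correctly) is the corrected statement with $\beta_i:=s_{\alpha_N}\cdots s_{\alpha_{i+1}}(\alpha_i)$; you should say so explicitly rather than assert the bookkeeping lands on the formula as printed. Note that the corrected indexing is also what the paper actually needs: $\{s_{\alpha_N}\cdots s_{\alpha_{i+1}}(\alpha_i)\mid 1\leq i\leq N\}$ is the inversion set of $w_0^P$, namely $\Rcal^+\backslash\Rcal^+_\Ical$, which is the set invoked in Corollary~\ref{cor:kltflag}, whereas the printed $\beta_i$ form the inversion set of $(w_0^P)^{-1}$, a genuinely different set in general (in the example above, $\{\gamma,\alpha+\gamma\}$ instead of $\{\alpha,\alpha+\gamma\}$). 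Finally, a small caveat in your fibre step: for intermediate subwords the last letter may lie in $\Ical$, in which case the fibre is contracted rather than mapped isomorphically onto a translate of a Schubert curve; the degree is then $0=\langle\lambda,\alpha_k^\vee\rangle$, so the recursion survives, but ``isomorphically'' needs that qualification.
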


If $\Ical\subset\Scal$, we denote by $\Rcal_\Ical^+$ the set of positive roots generated by simple roots of $\Ical$. Then we define $\rho$ to be the half sum of positive roots, and $\rho^P$ to be the half sum of positive roots that are not in $\Rcal_\Ical^+$ (in particular, $\rho^B=\rho$).

It is well known that an anticanonical divisor of $G/P$ is $D_{2\rho^P}$.
Anticanonical divisors of Bott-Sameslon resolutions are also well-known.

\begin{prop}(\cite[Proposition~2]{Ramanathan})\label{prop:antican}
An anticanonical divisor of $Z/P$ is $\phi^*(D_\rho)+\sum_{i=1}^NE_i$.
\end{prop}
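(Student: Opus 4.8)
The plan is to prove the identity by induction on the length $N$ of the chosen reduced decomposition, exploiting the tower of $\mathbb{P}^1$-bundles underlying the Bott-Samelson construction. Write $BS_k$ for the variety attached to the truncated word $s_{\alpha_1}\cdots s_{\alpha_k}$, so that $Z/P\simeq BS_N$ and there is a locally trivial fibration $\psi\colon BS_N\longrightarrow BS_{N-1}$, obtained by forgetting the last factor, whose fibres are $P_{\alpha_N\cup\Ical}/P\simeq P_{\alpha_N}/B\simeq\mathbb{P}^1$. The base case $N=1$ is the computation $-K_{\mathbb{P}^1}=\mathcal{O}(2)$, which matches $\phi^*(D_\rho)+E_1$ precisely because $\langle\rho,\alpha_1^\vee\rangle=1$. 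Throughout I would use Proposition~\ref{prop:divBS}, which guarantees that the $E_i$ form a basis of divisor classes, so that every class has a unique expansion in the $E_i$.

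For the inductive step I would write $K_{BS_N}=\psi^*K_{BS_{N-1}}+K_\psi$. Since $\psi$ is a $\mathbb{P}^1$-bundle and $E_N$ (the locus $p_N\in P$) is a section of it, the relative anticanonical class is $-K_\psi=2E_N+\psi^*A_N$ for a unique class $A_N$ on $BS_{N-1}$; restricting to the section and using $-K_\psi|_{E_N}=N_{E_N/BS_N}=E_N|_{E_N}$ forces $A_N=-N_{E_N/BS_N}$. The normal bundle of the section is the line bundle associated with the $T$-weight of the tangent space of the fibre $P_{\alpha_N}/B$ at its base point, namely $-\alpha_N$; tracing this through the associated-bundle construction identifies $N_{E_N/BS_N}$ with the pullback of $\mathcal{L}_{G/P}(\alpha_N)$, i.e. with $\operatorname{ev}_{N-1}^*(D_{\alpha_N})$, where $\operatorname{ev}_k$ denotes evaluation of the first $k$ factors.

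Pulling back the inductive hypothesis $-K_{BS_{N-1}}=\phi_{N-1}^*(D_\rho)+\sum_{i<N}E_i$ along $\psi$ (so that $\psi^*\phi_{N-1}^*=\operatorname{ev}_{N-1}^*$) and inserting the value of $A_N$ yields
\[
-K_{BS_N}=\operatorname{ev}_{N-1}^*\!\left(D_{\rho-\alpha_N}\right)+\sum_{i<N}E_i+2E_N ,
\]
using linearity of $\lambda\mapsto D_\lambda$. Here I would invoke the mechanism behind Proposition~\ref{prop:pullback}: the maps $\operatorname{ev}_{N-1}$ and $\phi=\operatorname{ev}_N$ differ exactly by the last factor, and the resulting Demazure-type recursion reads $\operatorname{ev}_N^*(D_\lambda)=\operatorname{ev}_{N-1}^*(D_{s_{\alpha_N}\lambda})+\langle\lambda,\alpha_N^\vee\rangle E_N$. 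Applying this with $\lambda=\rho$, and using $\langle\rho,\alpha_N^\vee\rangle=1$ together with $s_{\alpha_N}\rho=\rho-\alpha_N$, gives $\operatorname{ev}_{N-1}^*(D_{\rho-\alpha_N})=\phi^*(D_\rho)-E_N$, and the displayed expression collapses to $\phi^*(D_\rho)+\sum_{i=1}^N E_i$, as claimed.

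The step I expect to be delicate is not the inductive skeleton but the bookkeeping in the middle. One must, first, fix the sign convention for $\mathcal{L}_{G/P}(\lambda)$ compatibly with Proposition~\ref{prop:pullback}, so that the tangent weight $-\alpha_N$ genuinely produces $+\alpha_N$ in the normal-bundle class (an incorrect sign here makes the coefficients of $E_1,\dots,E_{N-1}$ come out wrong); and, second, keep in mind that $\phi\neq\operatorname{ev}_{N-1}\circ\psi$, so the pullbacks arising from the induction cannot simply be rewritten as $\phi^*$ but only after applying the recursion. Reconciling these two conventions is exactly what makes the telescoping close; once they are compatible, the whole computation reduces to the two elementary identities $\langle\rho,\alpha_N^\vee\rangle=1$ and $s_{\alpha_N}\rho=\rho-\alpha_N$ for the simple root $\alpha_N$.
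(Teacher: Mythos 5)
Your proposal is correct, but there is no proof in the paper to compare it with: the paper does not prove Proposition~\ref{prop:antican} at all, it quotes it from \cite[Proposition~2]{Ramanathan}. What you have written is, in substance, a reconstruction of the standard (Ramanathan--Demazure) argument: induction along the tower of $\mathbb{P}^1$-bundles, with three key identities, namely (a) $-K_\psi=2E_N-\psi^*N_{E_N/BS_N}$, obtained by restricting to the section $E_N$; (b) $N_{E_N/BS_N}=\operatorname{ev}_{N-1}^*\mathcal{L}(\alpha_N)$, where the sign is forced by the paper's convention $(g,z)\sim(gb,\lambda(b)z)$ for $\mathcal{L}(\lambda)$ (the tangent weight of the fibre at its base point is $-\alpha_N$, and that one-dimensional $B$-module is precisely the one underlying $\mathcal{L}(+\alpha_N)$, as you say); and (c) the Demazure recursion $\operatorname{ev}_N^*(D_\lambda)=\psi^*\operatorname{ev}_{N-1}^*(D_{s_{\alpha_N}\lambda})+\langle\lambda,\alpha_N^\vee\rangle E_N$, which closes the telescope via $\langle\rho,\alpha_N^\vee\rangle=1$ and $s_{\alpha_N}\rho=\rho-\alpha_N$. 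I checked (a)--(c) independently (seesaw on the $\mathbb{P}^1$-bundle plus restriction to the section); they are correct for the paper's conventions, and iterating (c) also recovers the pullback formula of Proposition~\ref{prop:pullback} in the form $\sum_i\langle\lambda,\beta_i^\vee\rangle E_i$ with $\beta_i=s_{\alpha_N}\cdots s_{\alpha_{i+1}}(\alpha_i)$, which is the indexing for which $\{\beta_i\}=\Rcal^+\backslash\Rcal^+_\Ical$, as needed in Corollary~\ref{cor:kltflag}.

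One point should be made explicit, because it is more than bookkeeping (your writing ``$\mathcal{L}_{G/P}(\alpha_N)$'' is a symptom): neither $\rho$ nor $\alpha_N$ is a character of $P$, and the truncated varieties $BS_k$, $k<N$, do not map to $G/P$; the induction must be run entirely with the evaluation maps to $G/B$ and the bundles $\mathcal{L}_{G/B}(\cdot)$. What your induction actually proves is $-K_{Z/P}=\operatorname{ev}^*\mathcal{L}_{G/B}(\rho)+\sum_{i=1}^NE_i=\sum_{i=1}^N\bigl(\langle\rho,\beta_i^\vee\rangle+1\bigr)E_i$, where $\operatorname{ev}\colon Z/P\simeq BS_N\to G/B$. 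This is exactly the form in which the proposition is used in the proof of Corollary~\ref{cor:kltflag}, and it is the correct one; it is \emph{not} the same class as the literal pullback along $\phi$ of the $G/P$-divisor $\sum_{\alpha\in\Scal\backslash\Ical}\langle\rho,\alpha^\vee\rangle D_\alpha$, from which it differs by $\operatorname{ev}^*(\sum_{\alpha\in\Ical}D_{\varpi_\alpha})$. (For instance, for $G=SL_3$ and $\Ical=\{\alpha_1\}$ one has $Z/P\simeq\mathbb{F}_1$; the literal reading would give $2E_1+2E_2$, of self-intersection $4$, whereas $K_{\mathbb{F}_1}^2=8$, while your formula gives $3E_1+2E_2$, which is indeed anticanonical.) So your insistence on distinguishing $\phi^*$ from $\operatorname{ev}_{N-1}^*$, and on fixing the sign convention before computing the normal bundle, is precisely what makes the statement you prove the true one.
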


\begin{cor}\label{cor:kltflag}
The pair $(G/P,D)$  (with $\lfloor D\rfloor=0$ as in Theorem~\ref{th:kltflag}) is klt if and only if for any $\beta$ in $\Rcal^+\backslash\Rcal^+_\Ical$,
$$
\langle 2\rho^P-\rho-\sum_{\alpha\in \Scal\backslash\Ical}d_\alpha\varpi_\alpha,\beta^\vee\rangle> 0.
$$
\end{cor}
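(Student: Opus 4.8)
The plan is to compute the discrepancies of the log-resolution $\phi:\,Z/P\longrightarrow G/P$ produced by Theorem~\ref{th:kltflag} directly in terms of the prime divisors $(E_i)_{i=1}^N$ of Proposition~\ref{prop:divBS}, and then to read the klt condition off the signs of the resulting coefficients. First I would set $\mu:=\sum_{\alpha\in\Scal\setminus\Ical}d_\alpha\varpi_\alpha$, so that $D=D_\mu$ in the notation of Proposition~\ref{prop:pullback}, and recall that an anticanonical divisor of $G/P$ is $D_{2\rho^P}$. Since $\lambda\mapsto D_\lambda$ is additive, $-(K_{G/P}+D)=D_{2\rho^P}-D_\mu=D_{2\rho^P-\mu}$, and $2\rho^P-\mu$ is trivial on the coroots of $\Ical$ (both $\rho^P$ and each $\varpi_\alpha$ with $\alpha\notin\Ical$ are), hence a $\Qbb$-character of $P$. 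Applying Proposition~\ref{prop:pullback} then gives $\phi^*(K_{G/P}+D)=-\sum_{i=1}^N\langle 2\rho^P-\mu,\beta_i^\vee\rangle E_i$, while Proposition~\ref{prop:antican} gives $-K_{Z/P}=\sum_{i=1}^N(\langle\rho,\beta_i^\vee\rangle+1)E_i$. Both $K_{Z/P}$ and $\phi^*(K_{G/P}+D)$ are supported on the distinct prime divisors $E_i$, so comparing coefficients yields
\[
K_{Z/P}-\phi^*(K_{G/P}+D)=\sum_{i=1}^N a_iE_i,\qquad a_i=-1+\langle 2\rho^P-\rho-\mu,\beta_i^\vee\rangle .
\]
Thus $a_i>-1$ is precisely the displayed inequality of the statement evaluated at $\beta=\beta_i$.

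The next step is to identify the set $\{\beta_i:1\le i\le N\}$ coming from Proposition~\ref{prop:pullback}. These are the positive roots attached to the reduced decomposition of $w_0^P$, namely the $N=\dim G/P$ positive roots $\beta$ with $w_0^P\beta<0$. Because $w_0^P$ is the minimal-length representative of its coset, it sends $\Rcal^+_\Ical$ into $\Rcal^+$, so this inversion set is disjoint from $\Rcal^+_\Ical$; since it has cardinality $N=|\Rcal^+\setminus\Rcal^+_\Ical|$ it must equal $\Rcal^+\setminus\Rcal^+_\Ical$. Hence $i\mapsto\beta_i$ runs bijectively over $\Rcal^+\setminus\Rcal^+_\Ical$, and ``$a_i>-1$ for every $i$'' is literally ``the displayed inequality holds for every $\beta\in\Rcal^+\setminus\Rcal^+_\Ical$''.

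It remains to match kltness of $(G/P,D)$ with ``$a_i>-1$ for every $i$''. Since $\phi$ is a log-resolution (Theorem~\ref{th:kltflag}), Remark~\ref{rem:klt}(1) lets me test kltness on $\phi$ alone, and as $\lfloor D\rfloor=0$ is assumed, Remark~\ref{rem:klt}(2) reduces kltness to ``$a_i>-1$ for the exceptional $E_i$''. For the non-exceptional $E_i$, which are the strict transforms of the $D_\alpha$ with $\alpha\in\Scal\setminus\Ical$, the discrepancy is forced to be $a_i=-d_\alpha$ (the part of $K_{Z/P}-\phi^*K_{G/P}$ supported off the exceptional locus vanishes, and $\phi^*D$ has coefficient $d_\alpha$ along the strict transform of $D_\alpha$), hence $a_i>-1$ holds automatically from $d_\alpha\in[0,1[$. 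Therefore the inequalities attached to non-exceptional indices hold for free and may be adjoined without changing the truth value, so kltness of $(G/P,D)$ is equivalent to $a_i>-1$ for all $i$, i.e. to the stated system of inequalities.

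I expect the main obstacle to be the discrepancy bookkeeping in the first step: $\rho$ is not a character of $P$, so ``$\phi^*D_\rho$'' in Proposition~\ref{prop:antican} must be read through the formula $\sum_i\langle\rho,\beta_i^\vee\rangle E_i$ rather than as an honest pullback of a divisor class from $G/P$, and care is needed to ensure that only the combination $2\rho^P-\mu$, which genuinely is a character of $P$, is pulled back honestly. The second delicate point is pinning down $\{\beta_i\}$ as the inversion set $\{\beta\in\Rcal^+:w_0^P\beta<0\}=\Rcal^+\setminus\Rcal^+_\Ical$ (and not the inversion set of $w_0^P$ itself); getting this identification right is exactly what makes the non-exceptional discrepancies come out as $-d_\alpha$ and lets the equivalence close up.
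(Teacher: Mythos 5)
Your proposal is correct and follows essentially the same route as the paper's own proof: combine Propositions~\ref{prop:pullback} and~\ref{prop:antican} to get $K_{Z/P}-\phi^*(K_{G/P}+D)=\sum_{i=1}^N\bigl(\langle 2\rho^P-\rho-\sum_{\alpha\in\Scal\backslash\Ical}d_\alpha\varpi_\alpha,\beta_i^\vee\rangle-1\bigr)E_i$, identify $\{\beta_i\}$ with $\Rcal^+\backslash\Rcal^+_\Ical$ from the reduced word for $w_0^P$, and test kltness on this single $B$-stable log-resolution via Remark~\ref{rem:klt}. Your two added precisions --- reading $\phi^*(D_\rho)$ through the formula $\sum_i\langle\rho,\beta_i^\vee\rangle E_i$ because $\rho$ is not a character of $P$, and checking that the non-exceptional $E_i$ carry discrepancy $-d_\alpha>-1$ so that both directions of the equivalence close up --- are points the paper passes over silently, and you handle them correctly.
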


\begin{proof}
By Propositions~\ref{prop:pullback} and \ref{prop:antican}, we get $$\begin{array}{rcl}
K_{Z/P}-\phi^*(K_{G/P}+D) & = & -\phi^*(D_\rho)-\sum_{i=1}^NE_i +\phi^*(D_{2\rho^P})-\phi^*(D)\\
 & = & \phi^*(D_{2\rho^P-\rho-\sum_{\alpha\in \Scal\backslash\Ical}d_\alpha\varpi_\alpha})-\sum_{i=1}^NE_i\\
  & = & \sum_{i=1}^N(\langle 2\rho^P-\rho-\sum_{\alpha\in \Scal\backslash\Ical}d_\alpha\varpi_\alpha,\beta^\vee_i\rangle -1)E_i.
\end{array}$$

We conclude by remarking that, since $s_{\alpha_1}s_{\alpha_2}\cdots s_{\alpha_N}$ is a reduced expression of $w_0^P$, the set $\{\beta_i\,\mid\,i=1\cdots N\}$ is $\Rcal^+\backslash\Rcal^+_\Ical$.
\end{proof}

The condition of Corollary~\ref{cor:kltflag} is always satisfied by Proposition~\ref{prop:calculrootsys} and the hypothesis that $\lfloor D\rfloor=0$. Then Theorem~\ref{th:kltflag} is proved.

\section{Horospherical pairs}

From the classification of horospherical $G$-varieties, the description of $G$-equivariant morphisms between horospherical $G$-varieties, the description of $B$-stable Cartier divisor of horospherical $G$-varieties and the  description of a $B$-stable anticanonical divisor of horospherical $G$-varieties (see for example \cite{Fanohoro}), we have the following result.

\begin{prop}\label{prop:toroidal}
Let $X$ be a horospherical $G$-variety with open $G$-orbit isomorphic to $G/H$, torus fibration over the flag variety $G/P$. Then, there exists a smooth toric $P/H$-variety $Y$ and a $G$-equivariant birational morphism $f$ from the smooth horospherical $G$-variety $V:=G\times^PY$ to $X$, such that the exceptional locus of $f$ is of pure codimension one.

Let $D$ be a $B$-stable effective $\Qbb$-divisor of $X$ such that $\lfloor D\rfloor=0$.
Write $K_V-f^*(K_X+D)=-f_*^{-1}(D)+\sum_{i\in \mathcal{E}}a_iV_i$. Then, for any $i\in\mathcal{E}$,  $V_i$ is exceptional and $G$-stable, in particular there exists a $P$-stable divisor $Y_i$ of $Y$ such that $V_i=G\times^PY_i$. Moreover, $a_i>-1$ for any $i\in \mathcal{E}$.
\end{prop}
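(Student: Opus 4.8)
The plan is to read the statement off the combinatorial classification of horospherical varieties recorded in \cite{Fanohoro}, and then to reduce the discrepancy estimate to an elementary positivity property of the piecewise-linear datum attached to $K_X+D$.

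Recall that $X$ is encoded by a colored fan $\mathbb{F}$ in the space $N_\Rbb$, where $N$ is the lattice of one-parameter subgroups of the fibre torus $P/H$ and $M$ is its dual. The rays of $\mathbb{F}$ not issued from colors correspond to the $G$-stable prime divisors $X_j$ of $X$, the colors being the images $v_\alpha\in N$ of the divisors $D_\alpha$, $\alpha\in\Scal\setminus\Ical$. First I would choose a smooth colorless refinement of $\mathbb{F}$ (its toroidalization), realized by a sequence of star subdivisions along smooth $(P/H)$-stable centers of the fibre; by the classification the associated toroidal horospherical variety has the form $V=G\times^PY$ with $Y$ a smooth toric $P/H$-variety, so $V$ is smooth (a locally trivial fibration over the smooth base $G/P$ with smooth fibre $Y$), and the refinement of fans produces the $G$-equivariant birational morphism $f\colon V\to X$. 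Because each step blows up a smooth invariant center, the exceptional locus of $f$ is a union of (strict transforms of) exceptional divisors, hence of pure codimension one.

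Next I would compute the discrepancies. Write $D=\sum_j d_jX_j+\sum_{\alpha\in\Scal\setminus\Ical}d_\alpha D_\alpha$ with all coefficients in $[0,1[$, which is legitimate since $\lfloor D\rfloor=0$ and every prime $B$-stable divisor is of one of these two types. Using the $B$-stable anticanonical divisors I would write $-K_X=\sum_jX_j+\sum_\alpha c_\alpha D_\alpha$ and $-K_V=\sum_{\text{rays of }Y}V_{\text{ray}}+\sum_\alpha c_\alpha D_\alpha^V$ with the \emph{same} color coefficients $c_\alpha=\langle 2\rho^P,\alpha^\vee\rangle$, the base $G/P$ and the colors being untouched by the toroidalization (each $D_\alpha$ has a strict transform $D_\alpha^V$ and $f$ is an isomorphism near its generic point). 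As $-(K_X+D)$ is $\Qbb$-Cartier it is given on each maximal colored cone $\sigma_0$ by a linear form $m_{\sigma_0}\in M_\Qbb$ with $\langle m_{\sigma_0},u_j\rangle=1-d_j$ on the $G$-stable generators $u_j$ and $\langle m_{\sigma_0},v_\alpha\rangle=c_\alpha-d_\alpha$ on the color generators. Writing $K_V-f^*(K_X+D)=K_V+f^*\big(-(K_X+D)\big)$ and subtracting, the coefficients along the strict transforms of the $X_j$ and of the $D_\alpha$ reduce to $-d_j$ and $-d_\alpha$, that is, to $-f_*^{-1}(D)$; hence the remaining divisors $V_i$ are exactly the new rays $u_i$ of $Y$, which are $G$-stable (so $V_i=G\times^PY_i$ for a $(P/H)$-stable divisor $Y_i$ of $Y$) and exceptional, and they occur with coefficient $a_i=\langle m_{\sigma_0},u_i\rangle-1$. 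It then remains to prove $\langle m_{\sigma_0},u_i\rangle>0$: since $u_i$ lies in the coarse cone $\sigma_0$, it is a nonnegative combination $u_i=\sum_j\lambda_ju_j+\sum_\alpha\mu_\alpha v_\alpha$, so $\langle m_{\sigma_0},u_i\rangle=\sum_j\lambda_j(1-d_j)+\sum_\alpha\mu_\alpha(c_\alpha-d_\alpha)$, and every $1-d_j$ is positive while every $c_\alpha-d_\alpha$ is positive because $c_\alpha=\langle 2\rho^P,\alpha^\vee\rangle$ is a positive integer ($D_{2\rho^P}$ being ample on $G/P$) and $d_\alpha<1$; as $u_i\neq 0$ at least one coefficient is nonzero, whence $a_i>-1$.

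The main obstacle is the first step: producing a smooth model of the special shape $G\times^PY$ dominating $X$ whose exceptional locus is of pure codimension one (so ruling out small contractions) and along which the colors are preserved, so that all exceptional divisors are $G$-stable. This is exactly where the classification of horospherical varieties and of their equivariant morphisms is needed; once that combinatorial picture is secured, the rest is the convexity computation above.
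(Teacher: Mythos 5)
Your discrepancy computation (the second half of the statement) is correct, and it is in substance the paper's own argument written out in coordinates: where you expand a new ray generator $u_i$ as a nonnegative combination of the $u_j$ and the $v_\alpha$ and pair with the local linear form $m_{\sigma_0}$, the paper says that $-K_X-D$ is strictly effective (because $\lfloor D\rfloor=0$ and the anticanonical coefficients are positive integers) and that pull-backs of strictly effective $B$-stable $\Qbb$-Cartier divisors on horospherical varieties remain strictly effective; your convexity computation is exactly the proof of that pull-back assertion. So on that half you and the paper coincide.

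The genuine gap is in the first half, the existence of $f$ with exceptional locus of pure codimension one, and the justification you offer would actually fail. A sequence of star subdivisions accounts only for the refinement of the underlying fan; it says nothing about the removal of the colors, which is not a blow-up of a smooth invariant center of the fibre (indeed $X$ itself carries no fibration to $G/P$, so ``centers of the fibre'' do not exist on $X$), and decoloration is precisely where small contractions arise. Concretely, take $G=\operatorname{SL}_2\times\Cbb^*$ and $G/H=(\operatorname{SL}_2/U)\times\Cbb^*$, a rank-two horospherical homogeneous space over $G/P\simeq\Pbb^1$ whose unique color has image $v_\alpha=(1,0)$ in $N\simeq\Zbb^2$, and let $\mathbb{F}_X$ consist of the single maximal colored cone $\sigma=\mathrm{cone}((1,1),(0,-1))$, which carries the color and contains $v_\alpha$ in its interior; here $-K_X$ is even Cartier (the linear form $(2,-1)$ takes value $1$ on both ray generators and value $\langle 2\rho,\alpha^\vee\rangle=2$ on $v_\alpha$), so the hypotheses hold with $D=0$. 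The cone $\sigma$ is already smooth, so your recipe performs no subdivision at all, and removing the color yields $f:\,V=G\times^P\Cbb^2\to X$, which is an isomorphism over the open orbit and over both $G$-stable divisors, but contracts the closed orbit $G\times^P\{0\}\simeq\Pbb^1$ of the threefold $V$ to the $G$-fixed point of $X$: a small contraction, with no exceptional divisor whatsoever, so the conclusion of the proposition fails for this $f$. This is exactly what the paper's choice of $\mathbb{F}_Y$ is designed to rule out: it imposes (i) that the image of every color of $\mathbb{F}_X$ lie on an edge of $\mathbb{F}_Y$ --- forcing the ray through $v_\alpha$ to be inserted, so that the decoloration is covered by a new exceptional divisor --- and (ii) that every cone of $\mathbb{F}_Y$ that is not a cone of $\mathbb{F}_X$ contain an edge not in $\mathbb{F}_X$. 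Your star subdivisions secure (ii), but nothing in your construction secures (i). You did flag this step as ``the main obstacle'' and deferred it to the classification, but condition (i) is the missing idea there, and without it the first half of the proposition is not proved.
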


We do not want here to recall the long description and theory of horospherical varieties. To get more details, see for example \cite{Fanohoro} or \cite{SingSpher}.

\begin{proof}
With the description in terms of colored fans of horospherical $G$-varieties and $G$-equivariant morphisms between them, $Y$ can be chosen as the toric $P/H$-variety associated to a smooth subdivision $\mathbb{F}_Y$ of the fan associated to the colored fan $\mathbb{F}_X$ of $X$. Then we clearly have that $V:=G\times^PY$ is smooth and associated to the fan $\mathbb{F}_Y$ considered as a colored fan without color. In particular, there exists a $G$-equivariant morphism from $V:=G\times^PY$ to $X$. 

Moreover, we can choose $\mathbb{F}_Y$ such that:
\begin{itemize}
\item each image of a color of $\mathbb{F}_X$ is in an edge of $\mathbb{F}_Y$ and,
\item each cone of $\mathbb{F}_Y$ that is not a cone of $\mathbb{F}_X$ contains an edge that is in $\mathbb{F}_Y$ but not in $\mathbb{F}_X$.
\end{itemize}

These two conditions implies that the exceptional locus of $f$ is of pure codimension one.\\

Any exceptional divisor $V_i$ of $f$ is $G$-stable and of the form $G\times^PY_i$ where $Y_i$ is a $P$-stable divisor of $Y$.

It remains to prove that $a_i>-1$ for any $i\in \mathcal{E}$. We use that $-K_X=\sum_{i=1}^mX_i+\sum_{\alpha\in\Scal\backslash\Ical}a_\alpha D_\alpha$ where the $X_i$'s are the $G$-stable irreducible divisors of $X$ and the $a_\alpha$ are positive integers.
 Similarly, with our notation, $K_V=-\sum_{i=1}^{m}X_i-\sum_{i\in\mathcal{E}}f_*^{-1}(X_i)-\sum_{\alpha\in\Scal\backslash\Ical}a_\alpha D_\alpha$.
In particular, by hypothesis on $D$, we remark that the divisor $-K_X-D$ is strictly effective (ie,$\sum_{i=1}^mb_iX_i+\sum_{\alpha\in\Scal\backslash\Ical}b_\alpha D_\alpha$, with $b_i>0$ for any $i\in\{1,\dots,m\}$ and $b_\alpha>0$ for any $\alpha\in\Scal\backslash\Ical$) and then, by the description of pull-backs of $B$-stable divisors of horospherical varieties, $f^*(-K_X-D)$ is also strictly effective. Hence, we have $a_i>-1$ for any $i\in \mathcal{E}$.
\end{proof}

\begin{teo}
Let $X$ be a horospherical $G$-variety. 
Let $D$ be any $B$-stable $\Qbb$-divisor $D$ of $X$ such that $\lfloor D\rfloor=0$, then $(X,D)$ has klt singularities.
\end{teo}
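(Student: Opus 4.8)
The plan is to build a single log-resolution of $(X,D)$ by combining the toroidal resolution of Proposition~\ref{prop:toroidal} with the Bott-Samelson resolution of the flag direction from Section~3, and then to read off all discrepancies from the two explicit computations already available. Throughout I assume, as is implicit in the notion of klt, that $D$ is effective and $K_X+D$ is $\Qbb$-Cartier. First I would invoke Proposition~\ref{prop:toroidal} to obtain the smooth horospherical variety $V=G\times^PY$ (with $Y$ a smooth toric $P/H$-variety) and the $G$-equivariant birational morphism $f:\,V\lra X$, together with the relation $K_V-f^*(K_X+D)=-f_*^{-1}(D)+\sum_{i\in\mathcal E}a_iV_i$ in which every $V_i$ is $f$-exceptional and $G$-stable and every $a_i>-1$.

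Next I would resolve the flag direction of $V$. Let $\psi:\,V=G\times^PY\lra G/P$ be the bundle projection and $\phi:\,Z/P\lra G/P$ the Bott-Samelson resolution of Section~3, and set $W:=V\times_{G/P}(Z/P)$, with projections $\mathrm{pr}:\,W\lra Z/P$ and $\bar\mu:\,W\lra V$. Because $\psi$ is a Zariski-locally-trivial fibre bundle with smooth fibre $Y$, locally over $G/P$ the morphism $\bar\mu$ is $\phi\times\id_Y$; hence $W$ is smooth, $W\cong Z\times^PY$, $\bar\mu$ is birational, and $g:=f\circ\bar\mu:\,W\lra X$ is a resolution of $X$. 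The same local product description shows that $g$ is in fact a log-resolution of $(X,D)$: the exceptional locus is pure of codimension one, and the total boundary is simple normal crossing because, in the local product description, it is the union of $\mathrm{pr}^*\!\big(\sum_iE_i\big)$ (the pullback of the simple normal crossing divisor of Proposition~\ref{prop:divBS}) with the toric boundary of the smooth fibre $Y$, two transverse simple normal crossing families.

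Then I would compute discrepancies via the composition formula $K_W-g^*(K_X+D)=\big(K_W-\bar\mu^*K_V\big)+\bar\mu^*\big(K_V-f^*(K_X+D)\big)$. The local identification $\bar\mu=\phi\times\id_Y$ gives the base-change identity $K_W-\bar\mu^*K_V=\mathrm{pr}^*(K_{Z/P}-\phi^*K_{G/P})$, which by Propositions~\ref{prop:pullback} and~\ref{prop:antican} equals $\sum_i(\langle 2\rho^P-\rho,\beta_i^\vee\rangle-1)\,\mathrm{pr}^*E_i$. Pulling back the toroidal relation and using that the strict transform of $D_\alpha$ on $V$ is $\psi^*D_\alpha$, together with $\phi^*D_\alpha=\sum_i\langle\varpi_\alpha,\beta_i^\vee\rangle E_i$ (Proposition~\ref{prop:pullback} with $\lambda=\varpi_\alpha$), the $\phi$-exceptional divisors $\mathrm{pr}^*E_i$ acquire discrepancy $\langle 2\rho^P-\rho-\sum_\alpha d_\alpha\varpi_\alpha,\beta_i^\vee\rangle-1$, exactly as in Corollary~\ref{cor:kltflag}, while the strict transforms of the $G$-stable divisors $V_i$ keep discrepancy $a_i$ (the discrepancy of a valuation being independent of the model).

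Finally I would conclude. The $g$-exceptional divisors are of two kinds: the $\phi$-exceptional $\mathrm{pr}^*E_i$, whose discrepancy exceeds $-1$ precisely when $\langle 2\rho^P-\rho-\sum_\alpha d_\alpha\varpi_\alpha,\beta_i^\vee\rangle>0$, which holds for every $\beta_i\in\Rcal^+\backslash\Rcal^+_\Ical$ by Proposition~\ref{prop:calculrootsys} (as in the proof of Theorem~\ref{th:kltflag}); and the strict transforms of the $V_i$, with discrepancy $a_i>-1$ from Proposition~\ref{prop:toroidal}. Since moreover $\lfloor D\rfloor=0$, Remark~\ref{rem:klt}(2) applied to the log-resolution $g$ shows that $(X,D)$ is klt. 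The main obstacle, and the step deserving the most care, is the compatibility of the two resolutions: proving the base-change identity for the relative canonical divisor and the simple-normal-crossing property, for which the local trivialization $\bar\mu=\phi\times\id_Y$ of the bundle $\psi$ is the essential tool; once this is in place, the discrepancy bookkeeping reduces to the already-established flag and toroidal computations.
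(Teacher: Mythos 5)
Your proposal is correct and follows essentially the same route as the paper: you build the same resolution $Z\times^PY\to G\times^PY\to X$ (your fibre product $V\times_{G/P}(Z/P)$ is exactly the paper's $V'=Z\times^PY$), split the discrepancy into the relative flag-direction part and the pulled-back toroidal part, and conclude from Proposition~\ref{prop:toroidal}, the Bott--Samelson pullback/anticanonical formulas, and Proposition~\ref{prop:calculrootsys}. The only difference is bookkeeping: the paper groups $f_*^{-1}(D_B)$ with the relative term so as to quote Theorem~\ref{th:kltflag} as a black box, whereas you expand it via Proposition~\ref{prop:pullback} and invoke Corollary~\ref{cor:kltflag} directly --- these are equivalent.
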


\begin{proof}

Let $f$ be as in Proposition~\ref{prop:toroidal} and let $Z$ be as in Theorem~\ref{th:kltflag}. Define $V':=Z\times^PY$ and let $\pi:\,V'\longrightarrow V$ the natural $B$-equivariant morphism defined from $\phi$. \\

We first prove that the $B$-equivariant morphism $f\circ \pi:\,V'\longrightarrow X$ is a log resolution of $(X,D)$. By composition, it is clearly a birational morphism and its exceptional locus is the union of the inverse images $Z\times^PY_i$ of the exceptional divisors of $f$ and the exceptional divisors $F_i\times^PY$ of $\pi$ (the exceptional locus of $\pi$ is of pure codimension one because $V$ is smooth).

The divisor $(f\circ \pi)_*^{-1}(D)+\sum_{E\in\rm{Exc}(f\circ \pi)}E$ is a $B$-stable divisor of $V'$ and then has simple normal crossings. Indeed, a $B$-stable irreducible divisor of $V'$ is either $F_i\times^PY$ where $F_i$ is one of the $B$-stable irreducible divisors of $Z$ described in Proposition~\ref{prop:divBS}, or $Z\times^PY_i$ where $Y_i$ is a $P$-stable divisor of $Y$. (Recall that, any divisor of a smooth toric variety that is stable under the action of the torus has simple normal crossings, because such a variety is everywhere locally isomorphic to $\Cbb^n$ with the natural action of $(\Cbb^*)^n$.)\\


Since $D$ is $B$-stable, we have $D=\sum_{i=1}^md_iX_i+\sum_{\alpha\in\Scal\backslash\Ical}d_\alpha D_\alpha$ where the $X_i$'s are the $G$-stable irreducible divisors of $X$. We denote by $D_B$ the $B$-stable but not $G$-stable part $\sum_{\alpha\in\Scal\backslash\Ical}d_\alpha D_\alpha$ of $D$. Then we decompose $K_{V'}-(f\circ\pi)^*(K_X+D)$ as follows: $$(K_{V'}-\pi^*(K_V+f_*^{-1}(D_B)))+\pi^*(K_V-f^*(K_X+D)+f_*^{-1}(D_B)).$$

By Proposition~\ref{prop:toroidal}, $K_V-f^*(K_X+D)+f_*^{-1}(D_B)=\sum_{i\in\mathcal{E}}a_iV_i$, where for any $i\in \mathcal{E}$, $a_i>-1$ and $V_i=G\times^PY_i$ with some $P$-stable irreducible divisor $Y_i$ of $Y$. We remark that the inverse image of $V_i$ by $\pi$ is the irreducible divisor $Z\times^PV_i$ so that $\pi^*(V_i)=Z\times^PY_i$.
Hence, $\pi^*(K_V-f^*(K_X+D)+f_*^{-1}(D))=\sum_{i\in\mathcal{E}}a_iZ\times^PY_i.$\\

To compute $K_{V'}-\pi^*(K_V+f_*^{-1}(D_B))$, we use the fibrations $p:\,V=G\times^PY\longrightarrow G/P$ and $p':\,V'=Z\times^PY\longrightarrow Z/P$, which have the same fiber. To summarize, we get the following commutative diagram.

$$ \xymatrix{
V'=Z\times^PY\ar[r]^{\pi} \ar[d]^{p'} & V=G\times^PY \ar[r]^-{f} \ar@{->}[d]^{p} &  X\\
Z/P\ar[r]^{\phi} & G/P & \\}
$$

 In particular, we have $K_V=p^*(K_{G/P})+K_p$ and $K_{V'}=p^*(K_{Z/P})+K_{p'}$. Moreover, the relative canonical divisors $K_{p'}$ and $K_p$ satisfy $K_{p'}=\pi^*(K_p)$.
 
Moreover, for any $B$-stable irreducible divisor $D$ of $V$ that is not $G$-stable, $D$ is the pull-back by $p$ of a Schubert divisor of $G/P$, in particular $D=p^*(p_*(D))$.

Hence, we get \begin{align*}
K_{V'}-\pi^*(K_V+f_*^{-1}(D_B)) & =  p'^*(K_{Z/P})+K_{p'}-\pi^*p^*(K_{G/P})-\pi^*(K_p) -\pi^*(f^{-1}_*(D_B))\\
 & = p'^*(K_{Z/P})+\pi^*(K_p)-p'^*\phi^*(K_{G/P})-\pi^*(K_p)\\ 
 & \hspace{6cm}
 -\pi^*(p^*p_*(f^{-1}_*(D_B)))\\
  & = p'^*(K_{Z/P}-\phi^*(K_{G/P}+p_*(f^{-1}_*(D_B))).
  \end{align*}

Remark that $\lfloor p_*(f^{-1}_*(D_B)\rfloor=\lfloor D_B\rfloor$, so that by Theorem~\ref{th:kltflag}, we get $K_{Z/P}-\phi^*(K_{G/P}+p_*(f^{-1}_*(D_B))=\sum_{i\in\mathcal{E}'}a_iF_i/P$, where for any $i\in\mathcal{E}'$, we have $a_i>-1$ and $F_i$ is a $B\times P$-stable irreducible divisor of $Z$.

Hence, we have $K_{V'}-\pi^*(K_V+f_*^{-1}(D_B))=\sum_{i\in\mathcal{E}'}a_iF_i\times^P Y$.

And finally, we have $$K_{V'}-(f\circ\pi)^*(K_X+D)=\sum_{i\in\mathcal{E}'}a_iF_i\times^P Y+\sum_{i\in\mathcal{E}}a_iZ\times^PY_i,$$ with, for any $i\in\mathcal{E}'\cup\mathcal{E}$, $a_i>-1$.
\end{proof}

\section{A result on root systems}

In that independent section, we prove the result that permits to deduce Theorem~\ref{th:kltflag} from Corollary~\ref{cor:kltflag}. We keep notations of section~2 and we recall that, if $\Ical\subset\Scal$, we denote by $\Rcal_\Ical^+$ the set of positive roots generated by simple roots of $\Ical$, $\rho$ denotes the half sum of positive roots, and $\rho^P$ denotes the half sum of positive roots that are not in $\Rcal_\Ical^+$.

\begin{prop}\label{prop:calculrootsys}
For any (proper) parabolic subgroup $P$ of $G$ containing $B$, and for any $\beta$ in $\Rcal^+\backslash\Rcal^+_\Ical$,
\begin{equation}\label{eq1}
\langle 2\rho^P-\rho-\sum_{\alpha\in \Scal\backslash\Ical}\varpi_\alpha,\beta^\vee\rangle\geq 0.
\end{equation}
\end{prop}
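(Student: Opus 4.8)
The plan is to work throughout with the single weight
$$\mu:=2\rho^P-\rho-\sum_{\alpha\in\Scal\backslash\Ical}\varpi_\alpha$$
and to prove $\langle\mu,\beta^\vee\rangle\ge 0$ directly. First I would record the elementary identities $\rho^P=\rho-\rho_\Ical$ (so $2\rho^P-\rho=\rho-2\rho_\Ical$), where $\rho_\Ical:=\tfrac12\sum_{\theta\in\Rcal^+_\Ical}\theta$, together with $\rho=\sum_{\alpha\in\Scal}\varpi_\alpha$. These collapse $\mu$ to $\nu-2\rho_\Ical$ with $\nu:=\sum_{\alpha\in\Ical}\varpi_\alpha$. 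The gain is that $\lambda_0:=\nu-\rho_\Ical$ is a \emph{dominant} weight orthogonal to the span of $\Ical$: for $\delta\in\Ical$ one has $\langle\lambda_0,\delta^\vee\rangle=1-1=0$, while for $\alpha\in\Scal\backslash\Ical$ one has $\langle\nu,\alpha^\vee\rangle=0$ and $\langle\rho_\Ical,\alpha^\vee\rangle\le 0$ (each $\theta\in\Rcal^+_\Ical$ is a nonnegative combination of simple roots of $\Ical$, all pairing nonpositively with $\alpha^\vee$), whence $\langle\lambda_0,\alpha^\vee\rangle\ge 0$. Thus $\mu=\lambda_0-\rho_\Ical$, and, fixing a $W$-invariant inner product $(\cdot,\cdot)$, the statement to prove is $(\lambda_0,\beta)\ge(\rho_\Ical,\beta)$ for all $\beta\in\Rcal^+\backslash\Rcal^+_\Ical$.

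The first reduction is to $\Ical$-dominant $\beta$. Writing $W_\Ical:=\langle s_\delta\mid\delta\in\Ical\rangle$, this group fixes $\lambda_0$ (since $\langle\lambda_0,\delta^\vee\rangle=0$) and permutes $\Rcal^+\backslash\Rcal^+_\Ical$. Hence $(\lambda_0,w\beta)=(\lambda_0,\beta)$ is constant on the $W_\Ical$-orbit of $\beta$, whereas $(\rho_\Ical,w\beta)$ attains its maximum at the unique $\Ical$-dominant representative $\beta^+$ of the orbit. Therefore $(\mu,w\beta)$ is minimized at $\beta^+$, and it suffices to prove $(\mu,\beta^+)\ge 0$ for $\Ical$-dominant $\beta^+\in\Rcal^+\backslash\Rcal^+_\Ical$.

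The second step is an induction on the height of $\beta^+$. The engine is the reflection identity $\langle\mu,(s_\gamma\beta)^\vee\rangle=\langle\mu,\beta^\vee\rangle-\langle\mu,\gamma^\vee\rangle\langle\gamma,\beta^\vee\rangle$, combined with the sign computation $\langle\mu,\gamma^\vee\rangle\ge 0$ for $\gamma\in\Scal\backslash\Ical$ and $\langle\mu,\gamma^\vee\rangle=-1$ for $\gamma\in\Ical$. If some $\gamma\in\Scal\backslash\Ical$ satisfies $\langle\beta^+,\gamma^\vee\rangle>0$ with $s_\gamma\beta^+$ still outside $\Rcal^+_\Ical$, then $s_\gamma\beta^+$ has smaller height, is again $\Ical$-dominant (because $\langle\gamma,\delta^\vee\rangle\le0$ for $\delta\in\Ical$), and satisfies $\langle\mu,(\beta^+)^\vee\rangle\ge\langle\mu,(s_\gamma\beta^+)^\vee\rangle$; the induction hypothesis then finishes this case. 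This reduces the whole problem to the \emph{base roots}: the $\Ical$-dominant $\beta^+$ for which every height-lowering reflection by a $\gamma\in\Scal\backslash\Ical$ either fails to exist or carries $\beta^+$ into $\Rcal^+_\Ical$.

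The base case is the main obstacle, and it is precisely where properness of $P$ and the fine structure of the root system enter; the examples I have run show the inequality is sharp there, with $\langle\mu,\beta^\vee\rangle=0$. I expect such $\beta^+$ to have support meeting $\Scal\backslash\Ical$ in a single node $\alpha_0$ with $s_{\alpha_0}\beta^+=\theta'\in\Rcal^+_\Ical$, so that $\langle\mu,(\beta^+)^\vee\rangle=\langle\mu,\theta'^\vee\rangle-\langle\mu,\alpha_0^\vee\rangle\langle\alpha_0,\theta'^\vee\rangle$; using $\langle\mu,\theta'^\vee\rangle=-\langle\nu,\theta'^\vee\rangle$ this turns the claim into the maximal-parabolic inequality $\langle\mu,\alpha_0^\vee\rangle\,\lvert\langle\alpha_0,\theta'^\vee\rangle\rvert\ge\langle\nu,\theta'^\vee\rangle$, a statement internal to the irreducible sub-root-system on $\Ical\cup\{\alpha_0\}$. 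I would finish either uniformly, via the decomposition of the nilradical of $\mathfrak p$ into irreducible $L_\Ical$-modules (whose highest weights are exactly these base roots), or, absent a type-free argument, by a short verification on each irreducible Dynkin diagram. The two delicate points to control are that height reduction is always available outside the base locus, and that the residual maximal-parabolic inequality indeed holds; this last is the step I expect to demand the most care.
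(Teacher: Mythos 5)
Your preliminary reductions are correct: rewriting the weight as $\mu=\nu-2\rho_\Ical$ (with $\nu=\sum_{\alpha\in\Ical}\varpi_\alpha$ and $\rho_\Ical=\tfrac12\sum_{\theta\in\Rcal^+_\Ical}\theta$) is exactly the paper's reduction of equation~\ref{eq1} to equation~\ref{eq2}, the passage to the $\Ical$-dominant representative of the $W_P$-orbit is valid, and the height-induction step via the reflection identity is sound. But there is a genuine gap exactly where you flag it: the base case is never proved, only two candidate strategies are named. Worse, the structure you \emph{expect} for the base roots is false, so the ``maximal-parabolic inequality'' you reduce to does not even cover all base cases. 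Take $G$ of type $A_3$ and $\Ical=\{\alpha_1,\alpha_3\}$: then $\beta^+=\alpha_1+\alpha_2+\alpha_3$ lies in $\Rcal^+\backslash\Rcal^+_\Ical$, is $\Ical$-dominant, and satisfies $\langle\beta^+,\alpha_2^\vee\rangle=0$, so no height-lowering reflection by the unique element $\alpha_2$ of $\Scal\backslash\Ical$ exists, and $s_{\alpha_2}\beta^+=\beta^+\notin\Rcal^+_\Ical$, contrary to your expectation that $s_{\alpha_0}\beta^+\in\Rcal^+_\Ical$. For this root $\langle\mu,(\beta^+)^\vee\rangle=0$, so the inequality is sharp there, and your argument gives no way to establish it; a ``short verification on each irreducible Dynkin diagram'' is not a proof, and carrying it out honestly over all types and all $\Ical$ would be a substantial case analysis.

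The frustrating part is that you assembled every ingredient of a complete, induction-free argument and stopped one step short. You showed that $\lambda_0=\nu-\rho_\Ical$ is fixed by every $s_\delta$ with $\delta\in\Ical$, hence by all of $W_P$, and that $\mu=\lambda_0-\rho_\Ical$. Now apply the longest element $w_{0,P}$ of $W_P$: it sends $\rho_\Ical$ to $-\rho_\Ical$, so
$$w_{0,P}(\nu)=w_{0,P}(\lambda_0+\rho_\Ical)=\lambda_0-\rho_\Ical=\mu,$$
which is precisely the paper's Lemma~\ref{lem:calculrootsys}. Consequently, for every $\beta\in\Rcal^+\backslash\Rcal^+_\Ical$,
$$\langle\mu,\beta^\vee\rangle=\langle w_{0,P}(\nu),\beta^\vee\rangle=\langle\nu,w_{0,P}(\beta^\vee)\rangle\geq 0,$$
because the positive roots sent to negative roots by $w_{0,P}$ are exactly those of $\Rcal^+_\Ical$, so $w_{0,P}(\beta^\vee)$ is a positive coroot, and $\nu$ is dominant. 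This disposes of all $\beta$ at once (including your problematic base roots, where equality occurs precisely when $w_{0,P}(\beta)$ lies in the subsystem generated by $\Scal\backslash\Ical$), with no induction, no reduction to $\Ical$-dominant representatives, and no case analysis; that single application of $w_{0,P}$ is the idea your proposal is missing.
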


Note that $\rho=\sum_{\alpha\in \Scal}\varpi_\alpha$ and that $2\rho^P=2\rho -\sum_{\gamma\in\Rcal^+_\Ical}\gamma=2\sum_{\alpha\in \Scal}\varpi_\alpha-\sum_{\gamma\in\Rcal^+_\Ical}\gamma$. Hence, equation~\ref{eq1} is equivalent to
 
\begin{equation}\label{eq2}
\langle\sum_{\alpha\in \Ical}\varpi_\alpha-\sum_{\gamma\in\Rcal^+_\Ical}\gamma,\beta^\vee\rangle\geq 0.
\end{equation}


\begin{rem}
\begin{enumerate}[label=(\roman*)]
\item If $\Ical=\emptyset$ (ie if $P=B$), equations~\ref{eq1} and \ref{eq2} are trivially satisfied.
\item If $\beta\in\Rcal^+_\Ical$ then $\langle\sum_{\alpha\in \Ical}\varpi_\alpha-\sum_{\gamma\in\Rcal^+_\Ical}\gamma,\beta^\vee\rangle=-\langle\sum_{\alpha\in \Ical}\varpi_\alpha,\beta^\vee\rangle$ and is negative.
\end{enumerate}
\end{rem}

\begin{lem}\label{lem:calculrootsys}
Denote by $w_{0,P}$ the longest element of $W_P$. Then, we have $$w_{0,P}(\sum_{\alpha\in \Ical}\varpi_\alpha)=\sum_{\alpha\in \Ical}\varpi_\alpha-\sum_{\gamma\in\Rcal^+_\Ical}\gamma.$$
\end{lem}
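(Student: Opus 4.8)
The plan is to recognize $\sum_{\alpha\in\Ical}\varpi_\alpha$ as the sum of the half-sum of positive roots of the root subsystem $\Rcal_\Ical$ and a weight that $W_P$ fixes, and then to invoke the classical identity that a longest element negates the relevant $\rho$. Write $\varpi:=\sum_{\alpha\in\Ical}\varpi_\alpha$ and $\rho_\Ical:=\tfrac12\sum_{\gamma\in\Rcal^+_\Ical}\gamma$, so that the right-hand side of the lemma is precisely $\varpi-2\rho_\Ical$.

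First I would record two pairing computations for each $\alpha\in\Ical$. On one hand, $\langle\varpi,\alpha^\vee\rangle=\sum_{\beta\in\Ical}\langle\varpi_\beta,\alpha^\vee\rangle=1$, using $\langle\varpi_\beta,\alpha^\vee\rangle=\delta_{\alpha\beta}$ for simple roots. On the other hand, $\rho_\Ical$ is the half-sum of the positive roots of the root system $\Rcal_\Ical$ with simple system $\Ical$, so the standard fact $\langle\rho,\alpha^\vee\rangle=1$ (applied inside $\Rcal_\Ical$) gives $\langle\rho_\Ical,\alpha^\vee\rangle=1$ for every $\alpha\in\Ical$. Hence the difference satisfies $\langle\varpi-\rho_\Ical,\alpha^\vee\rangle=0$ for all $\alpha\in\Ical$.

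The key step is then immediate: since $s_\alpha(\mu)=\mu-\langle\mu,\alpha^\vee\rangle\alpha$, the vanishing of all these pairings means $\varpi-\rho_\Ical$ is fixed by every generator $s_\alpha$ ($\alpha\in\Ical$) of $W_P$, hence by all of $W_P$, and in particular $w_{0,P}(\varpi-\rho_\Ical)=\varpi-\rho_\Ical$. Combining this with the classical identity $w_{0,P}(\rho_\Ical)=-\rho_\Ical$ — which holds because the longest element $w_{0,P}$ of the Weyl group $W_P$ of $\Rcal_\Ical$ sends every positive root of $\Rcal_\Ical$ to a negative one and therefore negates $\rho_\Ical$ — I would conclude, using linearity of the $W_P$-action,
$$w_{0,P}(\varpi)=w_{0,P}(\varpi-\rho_\Ical)+w_{0,P}(\rho_\Ical)=(\varpi-\rho_\Ical)-\rho_\Ical=\varpi-2\rho_\Ical,$$
which is exactly the assertion since $2\rho_\Ical=\sum_{\gamma\in\Rcal^+_\Ical}\gamma$.

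There is no serious obstacle here; the argument is a short reduction to two classical facts about a root subsystem. The only point that demands care is a consistent bookkeeping between the fundamental weights $\varpi_\alpha$ of $G$ and the intrinsic $\rho$ of the subsystem $\Rcal_\Ical$: one must be sure that the facts $\langle\rho_\Ical,\alpha^\vee\rangle=1$ and $w_{0,P}(\rho_\Ical)=-\rho_\Ical$ are applied to $\Rcal_\Ical$ as a root system in its own right, with simple system $\Ical$ and Weyl group $W_P$, and not to the ambient system $\Rcal$.
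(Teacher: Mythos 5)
Your proof is correct, but it follows a different route from the paper's. The paper forms the discrepancy $\lambda:=w_{0,P}(\sum_{\alpha\in\Ical}\varpi_\alpha)-\sum_{\alpha\in\Ical}\varpi_\alpha+\sum_{\gamma\in\Rcal^+_\Ical}\gamma$ and kills it by linear algebra: it checks that $\lambda$ lies in the lattice $\bigoplus_{\alpha\in\Ical}\Zbb\alpha$ (via the fact, proved by induction on length, that $w(\mu)-\mu$ lies there for every $w\in W_P$), then computes $\langle\lambda,\alpha^\vee\rangle=-1-1+2=0$ for all $\alpha\in\Ical$, using that $w_{0,P}$ sends each simple root of $\Ical$ to the negative of a simple root of $\Ical$, and finally concludes $\lambda=0$ from the nondegeneracy of the Cartan pairing of $\Rcal_\Ical$. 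You instead decompose $\sum_{\alpha\in\Ical}\varpi_\alpha=(\varpi-\rho_\Ical)+\rho_\Ical$, observe that the first summand pairs to zero with every $\alpha^\vee$, $\alpha\in\Ical$, hence is fixed by all of $W_P$, and then invoke the classical identity $w_{0,P}(\rho_\Ical)=-\rho_\Ical$ for the longest element of the Weyl group of the subsystem $\Rcal_\Ical$. Your argument is somewhat cleaner: it needs neither the lattice-membership lemma nor the invertibility of the Cartan matrix, and it exposes the structural reason the lemma holds, namely that $\sum_{\alpha\in\Ical}\varpi_\alpha$ differs from $\rho_\Ical$ by a $W_P$-invariant weight (so in fact $w(\sum_{\alpha\in\Ical}\varpi_\alpha)=\sum_{\alpha\in\Ical}\varpi_\alpha-\rho_\Ical+w(\rho_\Ical)$ for every $w\in W_P$, not just for $w_{0,P}$). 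The paper's proof, on the other hand, is self-contained at the level of pairing computations and does not presuppose the $\rho$-negation property of longest elements; both rest on the same elementary input $\langle\sum_{\gamma\in\Rcal^+_\Ical}\gamma,\alpha^\vee\rangle=2$ for $\alpha\in\Ical$. One small remark in your favor: your normalization $\langle\rho_\Ical,\alpha^\vee\rangle=1$ for the half-sum is the correct one, whereas the paper's parenthetical comparison to ``$\langle\rho,\alpha^\vee\rangle=2$'' is consistent only with its un-halved $\rho_P=\sum_{\gamma\in\Rcal^+_\Ical}\gamma$, not with its own definition of $\rho$ as a half-sum.
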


\begin{proof}
First note that, for any character $\lambda$ of $T$ and for any $w\in W_P$, $w(\lambda)-\lambda$ is in the lattice $\bigoplus_{\alpha\in\Ical}\Zbb\alpha$ (it can be easily proved by induction on the length of $w$).

Moreover, $\rho_P:=\sum_{\gamma\in\Rcal^+_\Ical}\gamma$ satisfies $\langle\rho_P,\alpha^\vee\rangle=2$ for any $\alpha\in\Ical$ (same result as the well-known result: $\langle\rho,\alpha^\vee\rangle=2$ for any $\alpha\in\Scal$). And, since $w_{0,P}$ is the longest element of $W_P$, we that for any $\alpha\in\Ical$, the root $w_{0,P}(\alpha)$ is the opposite of a simple root in $\Ical$.

Hence, if we denote by $\lambda$ the character $w_{0,P}(\sum_{\alpha\in \Ical}\varpi_\alpha)-\sum_{\alpha\in \Ical}\varpi_\alpha+\sum_{\gamma\in\Rcal^+_\Ical}\gamma$, we get
$\lambda\in \bigoplus_{\alpha\in\Ical}\Zbb\alpha$ and, for any $\alpha\in\Ical$, we have $$\langle\lambda,\alpha^\vee\rangle=\langle\sum_{\alpha\in \Ical}\varpi_\alpha,w_{0,P}(\alpha^\vee)\rangle-\langle\sum_{\alpha\in \Ical}\varpi_\alpha,\alpha^\vee\rangle+\langle\rho_
P,\alpha^\vee\rangle=-1-1+2=0.$$ Then, we deduce that $\lambda=0$, which proves the lemma.
\end{proof}

\begin{proof}[Proof of Proposition~\ref{prop:calculrootsys}]
By Lemma~\ref{lem:calculrootsys}, we get for any $\beta\in\Rcal^+\backslash\Rcal_\Ical^+$,
$$\langle\sum_{\alpha\in \Ical}\varpi_\alpha-\sum_{\gamma\in\Rcal^+_\Ical}\gamma,\beta^\vee\rangle=\langle\sum_{\alpha\in \Ical}\varpi_\alpha,w_{0,P}(\beta^\vee)\rangle.$$

But, the positive roots that are send to negative roots by $w_{0,P}$ are exactly the roots in $\Rcal_\Ical^+$. In particular, for any $\beta\in\Rcal^+\backslash\Rcal_\Ical^+$, $w_{0,P}(\beta^\vee)$ is a positive coroot, and $$\langle\sum_{\alpha\in \Ical}\varpi_\alpha,w_{0,P}(\beta^\vee)\rangle\geq 0.$$
\end{proof}

\begin{rem}
Let $\beta\in\Rcal^+\backslash\Rcal_\Ical^+$. Then
$\langle\sum_{\alpha\in \Ical}\varpi_\alpha-\sum_{\gamma\in\Rcal^+_\Ical}\gamma,\beta^\vee\rangle=0$ if and only if $w_{0,P}(\beta)\in\Rcal_{\Scal\backslash\Ical}^+$. In particular, if $\Ical\neq\Scal$, there exists $\beta\in\Rcal^+\backslash\Rcal_\Ical^+$ such that $\langle\sum_{\alpha\in \Ical}\varpi_\alpha-\sum_{\gamma\in\Rcal^+_\Ical}\gamma,\beta^\vee\rangle=0$.
\end{rem}
\date{\textit{Acknowledgement: }I would like to thank C\'edric Bonnaf\'e who points me how to give a shorter proof of Proposition~\ref{prop:calculrootsys}.

\bibliographystyle{amsalpha}
\bibliography{SingSpher}

\providecommand{\bysame}{\leavevmode\hbox to3em{\hrulefill}\thinspace}
\providecommand{\MR}{\relax\ifhmode\unskip\space\fi MR }
\providecommand{\MRhref}[2]{%
  \href{http://www.ams.org/mathscinet-getitem?mr=#1}{#2}
}
\providecommand{\href}[2]{#2}
\begin{thebibliography}{Ram85}

\bibitem[AB04]{AlexeevBrion}
Valery Alexeev and Michel Brion, \emph{Stable reductive varieties. {II}.
  {P}rojective case}, Adv. Math. \textbf{184} (2004), no.~2, 380--408.

\bibitem[Dem74]{DemazureBS}
Michel Demazure, \emph{D\'esingularisation des vari\'et\'es de {S}chubert
  g\'en\'eralis\'ees}, Ann. Sci. \'Ecole Norm. Sup. (4) \textbf{7} (1974),
  53--88, Collection of articles dedicated to Henri Cartan on the occasion of
  his 70th birthday, I. \MR{0354697 (50 \#7174)}

\bibitem[Kol96]{Kollar}
J{\'a}nos Koll{\'a}r, \emph{Rational curves on algebraic varieties}, Ergebnisse
  der Mathematik und ihrer Grenzgebiete. 3. Folge. A Series of Modern Surveys
  in Mathematics [Results in Mathematics and Related Areas. 3rd Series. A
  Series of Modern Surveys in Mathematics], vol.~32, Springer-Verlag, Berlin,
  1996.

\bibitem[Laz04]{Lazarsfeld}
Robert Lazarsfeld, \emph{Positivity in algebraic geometry. {II}}, Ergebnisse
  der Mathematik und ihrer Grenzgebiete. 3. Folge. A Series of Modern Surveys
  in Mathematics [Results in Mathematics and Related Areas. 3rd Series. A
  Series of Modern Surveys in Mathematics], vol.~49, Springer-Verlag, Berlin,
  2004, Positivity for vector bundles, and multiplier ideals.

\bibitem[Pas08]{Fanohoro}
Boris Pasquier, \emph{Vari\'et\'es horosph\'eriques de {F}ano}, Bull. Soc.
  Math. France \textbf{136} (2008), no.~2, 195--225.

\bibitem[Pas15]{SingSpher}
\bysame, \emph{A survey on the singularities of spherical varieties}, to appear
  soon in arXiv (2015).

\bibitem[Ram85]{Ramanathan}
A.~Ramanathan, \emph{Schubert varieties are arithmetically {C}ohen-{M}acaulay},
  Invent. Math. \textbf{80} (1985), no.~2, 283--294. \MR{788411 (87d:14044)}

\end{thebibliography}
\bigskip\noindent

\end{document}